\titleformat{\section}{\normalfont\Large\bfseries}{\S\thesection}{1em}{}
\newcommand{\shrinkmargins}[1]{
  \addtolength{\textheight}{#1\topmargin}
  \addtolength{\textheight}{#1\topmargin}
  \addtolength{\textwidth}{#1\oddsidemargin}
  \addtolength{\textwidth}{#1\evensidemargin}
  \addtolength{\topmargin}{-#1\topmargin}
  \addtolength{\oddsidemargin}{-#1\oddsidemargin}
 \addtolength{\evensidemargin}{-#1\evensidemargin}
  }
\theoremstyle{plain}
\newtheorem{theorem}{Theorem}[section]
\newtheorem{lemma}[theorem]{Lemma}
\theoremstyle{remark}
\theoremstyle{definition}
\def \Q { \mathbb{Q}}
\def \det { \text{det}}
\newcommand{\GCD}{\text{GCD}}
\newcommand{\LCM}{\text{LCM}}
\begin{document}

\thispagestyle{empty}
%\pagenumbering{gobble}
%\setcounter{tocdepth}{1}

\title{An $\ell-p$ switch trick to obtain a new proof of a criterion for arithmetic equivalence.}
\author{Tristram Bogart \and  Guillermo Mantilla-Soler}
\date{}

\maketitle

\begin{abstract}
Two number fields are called {\it arithmetically equivalent} if they have the same Dedekind zeta function. In the 1970's Perlis showed that this is equivalent to the condition that for almost every rational prime $\ell$ the arithmetic type of $\ell$ is the same in each field. In the 1990's Perlis and Stuart gave an unexpected characterization for arithmetic equivalence; they showed that to be arithmetically equivalent it is enough for almost every prime $\ell$ to have the same number of prime factors in each field. Here, using an $\ell-p$ switch trick, we provide an alternative proof of that fact based on a classical elementary result of Smith from the 1870's.
\end{abstract}

\section{Introduction}
One of the most fundamental invariants of a number field is its Dedekind
zeta function \cite[Chapter VII, \S 5]{Neukirch}. It is well known that pairs of number fields with the same zeta function share many arithmetic invariants, including the discriminant, unit group, signature, the product of class number times regulator and others (see \cite{Perlis}, \cite{Klingen} and \cite{Manti2}.) A first characterization for equality between zeta functions was given by Perlis in terms of residue class degrees of rational primes. Let $K$ be a number field with maximal order $O_{K}$. The {\it arithmetic type} of a rational prime $\ell$ in $K$ is the ordered tuple $A_{K}(\ell):=(f_{1}, \ldots, f_{g})$ where $f_{1} \leq \cdots \leq f_{g}$ are the residue class degrees of $\ell$ in $K$. Perlis' characterization of the zeta function is the following:
\begin{theorem}[Perlis \cite{Perlis}]
Let $K$ and $L$ be number fields. The following are equivalent:

\begin{itemize}
\item[(a)] The fields $K$ and $L$ have the same zeta functions:  \[\zeta_{K}(s)=\zeta_{L}(s).\]

\item[(b)] For almost every rational prime $\ell$ the arithmetic types of $\ell$ in K and L coincide: \[A_{K}(\ell)=A_{L}(\ell).\]

\end{itemize}
\end{theorem}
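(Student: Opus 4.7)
The plan is to handle the two implications separately, with $(a)\Rightarrow(b)$ following quickly from the Euler product and $(b)\Rightarrow(a)$ requiring a Galois-theoretic reduction combined with Chebotarev's density theorem and the Artin formalism for $L$-functions.

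For $(a)\Rightarrow(b)$ I would expand
\[
\zeta_K(s) \;=\; \prod_{\ell} \prod_{i=1}^{g_\ell} \frac{1}{1-\ell^{-f_i s}},
\]
whose local factor at an unramified prime $\ell$ depends only on $A_K(\ell)=(f_1,\dots,f_{g_\ell})$. Since a Dirichlet series determines its Euler factors uniquely, the hypothesis $\zeta_K=\zeta_L$ forces these local factors to coincide at every $\ell$; the multiset $\{f_1,\dots,f_{g_\ell}\}$ is then recovered from the factor, for instance via the coefficients of the logarithmic expansion $\sum_{m\ge 1} m^{-1}(\sum_{i:f_i\mid m} f_i)\ell^{-ms}$, giving $A_K(\ell)=A_L(\ell)$.

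For $(b)\Rightarrow(a)$ the first step is to pass to a common Galois setting: let $M/\Q$ be the Galois closure of the compositum $KL$, and write $G=\gal(M/\Q)$, $H=\gal(M/K)$, $H'=\gal(M/L)$. The key translation is that for any $\ell$ unramified in $M$, the arithmetic type $A_K(\ell)$ equals the cycle type of any Frobenius element $\sigma_\ell \in G$ acting on the coset space $G/H$, and analogously for $L$ and $G/H'$. By Chebotarev's density theorem, every conjugacy class of $G$ arises as the Frobenius class of infinitely many primes; combined with hypothesis (b), this forces each $g\in G$ to have the same cycle type on $G/H$ as on $G/H'$. In particular the numbers of fixed points agree, so the permutation characters coincide and $\mathrm{Ind}_H^G \mathbf{1} \cong \mathrm{Ind}_{H'}^G \mathbf{1}$ as $G$-modules.

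The final step invokes Artin's formalism: $\zeta_K(s)=L(s,\mathrm{Ind}_H^G\mathbf{1},M/\Q)$ and $\zeta_L(s)=L(s,\mathrm{Ind}_{H'}^G\mathbf{1},M/\Q)$, so isomorphic induced representations yield equal zeta functions. The main obstacle is precisely this step, which rests on the non-trivial fact that Artin $L$-functions respect induction of characters and so agree factor-by-factor, including at ramified primes; this is where the deep machinery of $L$-functions is unavoidable on this route. A cleaner reformulation is to phrase the conclusion as Gassmann equivalence of $(H,H')$ in $G$ and cite Gassmann's theorem directly, which is the standard group-theoretic avatar of arithmetic equivalence.
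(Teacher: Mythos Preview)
The paper does not prove this theorem at all: it is stated in the introduction as a known result of Perlis and simply cited to \cite{Perlis}. The paper's original contribution is the proof of Theorem~\ref{ElTeorema} (the Perlis--Stuart characterization via splitting numbers), not of Perlis' earlier criterion. So there is no in-paper proof to compare your attempt against.

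That said, your argument is the standard one and is correct. The direction $(a)\Rightarrow(b)$ is exactly the Euler-product/unique-factorization argument you describe; your log-expansion indeed recovers, for each $m$, the quantity $\sum_{f_i\mid m} f_i$, from which the multiset of $f_i$ is obtained by induction on $m$. For $(b)\Rightarrow(a)$ you pass to a Galois closure $M/\Q$ and, via Chebotarev (which still applies after discarding the density-zero exceptional set and the finitely many primes ramified in $M$), conclude that every $g\in G$ has the same cycle type on $G/H$ and on $G/H'$; equality of fixed-point counts then gives equality of the permutation characters, i.e.\ Gassmann equivalence, and Artin's induction formalism yields $\zeta_K=\zeta_L$. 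This is precisely the classical route (Gassmann, Perlis), and your identification of the one nontrivial input---that the Artin $L$-function of an induced character equals the $\zeta$-function of the subfield, including at ramified primes---is apt. The paper itself uses the same circle of ideas (permutation representation of $G_\Q$ on embeddings, Artin $L$-functions, Chebotarev) only as ingredients in its proof of the \emph{other} theorem.
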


Here and in Theorem \ref{ElTeorema}, "almost all" means "with the possible exception of a set of primes of Dirichlet density zero."

Given a rational prime $\ell$ the number $g=g_{K}(\ell)$ of prime factors of $\ell$ in $O_{K}$ is called the {\it splitting number} of $\ell$ in $K$. It is obvious that if two number fields have the same arithmetic type for every prime $\ell$ then they have the same splitting number for every prime $\ell$. The following result, due to Perlis and Stuart, shows that the converse also holds:

\begin{theorem}\cite{PerlisStuart}\label{ElTeorema}
Let $K$ and $L$ be number fields. Then, the following are equivalent:
\begin{itemize}
\item[(a)] For almost all rational primes $\ell$ the arithmetic types of $\ell$ in $K$ and $L$ coincide: \[A_{K}(\ell)=A_{L}(\ell).\]

\item[(b)] For almost all rational primes $\ell$ the splitting numbers of $\ell$ in $K$ and $L$ coincide: \[g_{K}(\ell)=g_{L}(\ell).\]
\end{itemize}

\end{theorem}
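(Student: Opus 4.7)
The direction (a) $\Rightarrow$ (b) is immediate, since $g_K(\ell)$ equals the length of the tuple $A_K(\ell)$. For the nontrivial converse, my plan is to translate the statement into a question about cyclic actions on coset spaces and then to reduce that question to the invertibility of a classical GCD matrix.

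Let $M$ be a Galois extension of $\mathbb{Q}$ containing both $K$ and $L$ (for instance, the compositum of their Galois closures), and set $G = \mathrm{Gal}(M/\mathbb{Q})$, $H = \mathrm{Gal}(M/K)$, $H' = \mathrm{Gal}(M/L)$. For every prime $\ell$ unramified in $M$ with Frobenius $\sigma \in G$, a standard Dedekind-style computation identifies $A_K(\ell)$ with the multiset of lengths of the $\langle \sigma \rangle$-orbits on $G/H$; in particular $g_K(\ell)$ is the total number of such orbits. Writing $o_J(\tau)$ for the number of orbits of $\langle \tau \rangle$ on $G/J$, Chebotarev's theorem guarantees that every conjugacy class of $G$ arises with positive density as a Frobenius class, so the density-one hypothesis (b) upgrades to the assertion $o_H(\sigma) = o_{H'}(\sigma)$ for \emph{every} $\sigma \in G$.

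Here the $\ell$-$p$ switch enters: since this equality holds for every group element, it also holds for each power $\sigma^k$, so that $o_H(\sigma^k) = o_{H'}(\sigma^k)$ for every $\sigma \in G$ and every $k$ (the equation for $\sigma^k$ being cashed in at a prime $p$ whose Frobenius is conjugate to $\sigma^k$, which in general is a \emph{different} prime from $\ell$). A direct count shows that a $\sigma$-orbit of length $d$ breaks under $\sigma^k$ into $\gcd(d,k)$ sub-orbits; writing $N_d^J$ for the number of $\sigma$-orbits of length $d$ on $G/J$ and setting $n = |\sigma|$, one obtains
$$o_J(\sigma^k) = \sum_{d \mid n} N_d^J \cdot \gcd(d,k).$$
With $\Delta_d := N_d^H - N_d^{H'}$, the identities $o_H(\sigma^k) = o_{H'}(\sigma^k)$ become the homogeneous linear system
$$\sum_{d \mid n} \gcd(d,k)\,\Delta_d = 0 \qquad \text{for every } k \mid n.$$

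The coefficient matrix is precisely the Smith GCD matrix on the divisors of $n$, whose determinant was computed by H.~J.~S.~Smith in 1876 to be $\prod_{d \mid n}\varphi(d)$, which is nonzero. Hence $\Delta_d = 0$ for every $d \mid n$, so $\sigma$ acts with identical cycle structure on $G/H$ and on $G/H'$, and this translates back into $A_K(\ell) = A_L(\ell)$ for every prime $\ell$ unramified in $M$. None of the individual steps is deep; the main technical care lies in the Chebotarev bookkeeping underlying the $\ell$-$p$ switch, ensuring that a single density-one hypothesis in (b) may be exploited simultaneously against every cyclic subgroup $\langle \sigma \rangle$ of $G$ and each of its generators.
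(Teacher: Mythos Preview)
Your argument is correct and follows essentially the same route as the paper: the $\ell$--$p$ switch (applying the density-one hypothesis to powers of a fixed Frobenius via Chebotarev) feeds into a homogeneous linear system whose coefficient matrix is Smith's GCD matrix, and Smith's determinant formula forces the orbit-length multisets to agree. The only cosmetic difference is that the paper phrases the key count as the algebraic multiplicity of the eigenvalue $1$ in $\rho_E(\mathrm{Frob}_\ell)^N$ (computed via a separate linear-algebra lemma yielding $\sum_i \gcd(f_i,N)$), whereas you work directly with orbit counts on the coset space and use the elementary fact that a $\sigma$-cycle of length $d$ splits into $\gcd(d,k)$ cycles under $\sigma^k$; these are the same identity in different clothing.
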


A known proof of Theorem \ref{ElTeorema} is obtained by using a result (\cite[Theorem 30']{Serre}) on virtual characters and rational representations. See \cite[III, \S1, Theorem 1.5]{Klingen} or the original \cite{PerlisStuart} for details. The goal of this article is to give a short proof of Theorem \ref{ElTeorema} based on a classical result of H.J.S. Smith involving matrices of greatest common divisors. To apply Smith's result we use an $\ell$-$p$ switch trick; we begin by focusing on a Frobenius element for a prime $\ell$ and then switch to a Frobenius element for a different prime $p$.

\section{Proofs}
In this section we develop our new proof of Theorem \ref{ElTeorema}. We begin with an elementary observation in linear algebra.   
\begin{lemma}\label{LosValoresPropios}
Let $A$ be an $n \times n$ matrix with complex entries, and let $f_1,\ldots,f_g$ be positive integers. Suppose that the characteristic polynomial of $A$ is \[{\rm det}(XI-A) =\prod_{i=1}^{g}\left(X^{f_{i}}-1\right).\] Then for every positive integer $N$, the algebraic multiplicity of the eigenvalue $\lambda=1$ in the matrix $A^{N}$ is equal to 

\[ \sum_{d | N}  \# \{f_i:  {\rm GCD}(f_i, N) = d \}d\]
\end{lemma}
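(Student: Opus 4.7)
\medskip

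My plan is to work purely at the level of eigenvalues counted with algebraic multiplicity, avoiding any diagonalizability argument. The key observation is that if $\lambda_1,\ldots,\lambda_n$ are the eigenvalues of $A$ (listed with algebraic multiplicity), then the eigenvalues of $A^N$ with algebraic multiplicity are exactly $\lambda_1^N,\ldots,\lambda_n^N$; this is immediate from the Jordan form, since a Jordan block with diagonal entry $\lambda$ gives rise, after raising to the $N$-th power, to an upper-triangular block with diagonal entry $\lambda^N$. Consequently, the algebraic multiplicity of the eigenvalue $1$ in $A^N$ equals $\#\{k : \lambda_k^N = 1\}$.

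Next I would read the multiset of eigenvalues $\{\lambda_k\}$ directly off the given factorization of the characteristic polynomial. Since
\[
\det(XI - A) = \prod_{i=1}^{g}\bigl(X^{f_i} - 1\bigr),
\]
and each factor $X^{f_i}-1$ has distinct roots (namely, the $f_i$-th roots of unity), the eigenvalue multiset of $A$ is the disjoint union, over $i = 1,\ldots,g$, of the full set of $f_i$-th roots of unity. So the question reduces to: for each fixed $i$, how many $\zeta \in \mathbb{C}$ satisfy both $\zeta^{f_i}=1$ and $\zeta^N=1$?

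This is a standard cyclic-group calculation: the common solutions form the group of $\gcd(f_i,N)$-th roots of unity, so there are exactly $\gcd(f_i,N)$ of them. Summing over $i$, the multiplicity of $1$ in $A^N$ equals
\[
\sum_{i=1}^{g} \gcd(f_i,N).
\]
Finally I would reorganize this sum by grouping indices $i$ according to the value $d = \gcd(f_i,N)$, which must be a positive divisor of $N$; this immediately yields
\[
\sum_{d \mid N} d \cdot \#\{ i : \gcd(f_i,N) = d\},
\]
which is the claimed expression.

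I do not anticipate a real obstacle here: the entire argument is a bookkeeping exercise once one recognizes that the characteristic polynomial pins down the eigenvalues as a multiset, and that passing to $A^N$ simply raises them to the $N$-th power. The only thing that needs a moment of care is the assertion about eigenvalues of $A^N$ in the non-diagonalizable case, and this is handled by a single sentence invoking the Jordan form (no diagonalizability of $A$ itself is required).
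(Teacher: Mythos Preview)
Your argument is correct and essentially coincides with the paper's own proof. Both proofs use the Jordan (upper-triangular) form to see that the eigenvalues of $A^N$, with algebraic multiplicity, are the $N$-th powers of those of $A$, then count how many $f_i$-th roots of unity satisfy $\zeta^N=1$ to obtain $\sum_i \gcd(f_i,N)$ and regroup by divisors; the only cosmetic difference is that the paper writes the Jordan form as $A=D+M$ with $D$ diagonal and $M$ commuting nilpotent and reduces to $D$, whereas you phrase the same reduction in one sentence.
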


\begin{proof}
Let $\mu_{1}(\cdot)$ denote the function that calculates the algebraic multiplicity of the eigenvalue $1$ of a given linear transformation. First we show that we may assume that $A$ is diagonal. The matrix $A$ can be written, up to conjugacy, as \[A=D+M\] where $M$ is nilpotent upper triangular and $D$ is  diagonal  such that $DM=MD$. Since $A$ and $D$ have the same characteristic polynomial it follows that for any common eigenvalue $\lambda$  the algebraic multiplicity of it in  $A$ is the same as it is in $D$.  It follows from this, and from the fact  that for all positive integer $N$ \[(D+M)^N=D^N + M'\] where $M'$ is an upper triangular nilpotent matrix, that we may assume that $A$ is a diagonal matrix. More succinctly we have shown that for all positive integer $N$  \[ \mu_{1}(A^N)=\mu_{1}(D^N).\]

\noindent Since $A$ is assumed to be diagonal it can be written as a direct sum of diagonal matrices $A_{i}$ where each $A_{i}$ has characteristic polynomial $\displaystyle X^{f_{i}}-1$. In particular, for all positive integers $N$ we have \[ \mu_{1}(A^{N})=\sum_{i=1}^{g}\mu_{1}(A_{i}^{N}).\] Notice that the eigenvalues of $A_{i}$ are exactly the elements of the group of $f_i$-roots of unity. Hence, $\mu_{1}(A_{i}^{N})$ is the number of $f_i$-roots whose orders divide $N$. Since that number is ${\rm GCD}(f_{i}, N)$, we conclude that \[\mu_{1}(A^{N})=\sum_{i=1}^{g} {\rm GCD}(f_{i}, N)=\sum_{d | N}  \# \{f_i:  {\rm GCD}(f_i, N) = d \}d. \]
\end{proof}

We will also need the following statement in elementary number theory. 

\begin{theorem} \label{thm:2sequences}
  Suppose $a_1 \leq a_2 \leq \ldots \leq a_m$ and $b_1 \leq b_2 \leq \ldots \leq b_n$ are two sequences of integers that satisfy the equation
  \begin{equation} \label{eq:hypothesis} \sum_{d | N} d \# \{a_k: \GCD(a_k, N) = d \} = \sum_{d | N} d \# \{b_k: \GCD(b_k, N) = d \} \end{equation}
  for every positive integer $N$. Then $m = n$ and $a_i = b_i$ for $i=1, \ldots, m$.  
\end{theorem}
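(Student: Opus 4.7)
The plan is to rewrite the hypothesis in a more transparent form and then apply Smith's GCD matrix determinant formula.

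First, I would observe that the inner sum simplifies: grouping the terms $a_k$ according to the value $d = \gcd(a_k, N)$ (which necessarily divides $N$) gives
\[ \sum_{d \mid N} d \cdot \#\{a_k : \gcd(a_k, N) = d\} = \sum_{k=1}^{m} \gcd(a_k, N), \]
and analogously for the $b_k$. Thus the hypothesis (\ref{eq:hypothesis}) is equivalent to the much cleaner identity $\sum_k \gcd(a_k, N) = \sum_k \gcd(b_k, N)$ holding for every positive integer $N$. Setting $N = 1$ instantly yields $m = n$, so it remains to prove the multisets $\{a_1, \ldots, a_m\}$ and $\{b_1, \ldots, b_m\}$ coincide.

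Next, I would fix an integer $M$ strictly greater than all $a_k$ and $b_k$, and introduce the multiplicity vectors $v_a, v_b \in \mathbb{Z}^{M}$ defined by $v_a(j) = \#\{k : a_k = j\}$ and $v_b(j) = \#\{k : b_k = j\}$. Let $G$ be the $M \times M$ matrix with entries $G_{N,j} = \gcd(N, j)$. Then
\[ (G v_a)_N = \sum_{j=1}^{M} \gcd(N, j) v_a(j) = \sum_{k=1}^{m} \gcd(N, a_k), \]
and the same computation applies to $v_b$. Restricting the hypothesis to $1 \leq N \leq M$ gives $G v_a = G v_b$.

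The key step, and the main input from outside linear algebra, is Smith's classical theorem that $\det G = \prod_{i=1}^{M} \varphi(i)$, where $\varphi$ is the Euler totient function. In particular $\det G \neq 0$, so $G$ is invertible and $v_a = v_b$. Since the sorted sequences $(a_i)$ and $(b_i)$ are determined by their multiplicity vectors, this gives $a_i = b_i$ for every $i$. The only subtlety I anticipate is purely bookkeeping, namely making sure $M$ is chosen large enough that both multiplicity vectors are fully captured in $\mathbb{Z}^M$ and that the restricted system of equations is exactly $Gv_a = Gv_b$; the substantive content is already contained in Smith's determinant formula.
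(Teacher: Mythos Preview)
Your proposal is correct and follows essentially the same route as the paper: rewrite the hypothesis as $\sum_k \gcd(a_k,N)=\sum_k \gcd(b_k,N)$, encode the multiplicities as a vector, and kill the difference vector using the invertibility of a GCD matrix via Smith's determinant formula. The only cosmetic difference is your choice of factor-closed index set: you take $\{1,\dots,M\}$ for $M$ large, whereas the paper takes the set of divisors of $\LCM(a_1,\dots,a_m,b_1,\dots,b_n)$; both are factor-closed, so Smith's theorem applies equally, and the argument is otherwise identical.
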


To prove this, we will apply the following theorem of H.J.S. Smith.

\begin{theorem} \cite{Smith1875} \label{thm:Smith}
Let $S = \{ s_1, \ldots, s_c\}$ be a \emph{factor-closed} set of positive integers; that is, if $s \in S$, then every positive divisor of $s$ also belongs to $S$. Let $M = (m_{ij})$ be the symmetric $c \times c$ matrix given by $m_{ij} = \GCD(s_i, s_j)$. Then $\det(M) = \prod_{i=1}^c \phi(s_i)$.  
\end{theorem}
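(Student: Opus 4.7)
The plan is to express the GCD matrix $M$ as a product $A \Phi A^T$, where $A$ is a lower triangular $0/1$-matrix with unit diagonal (after a suitable reordering of $S$) and $\Phi$ is the diagonal matrix with entries $\phi(s_1), \ldots, \phi(s_c)$. Once such a factorization is in hand, the conclusion is immediate by multiplicativity of the determinant: $\det(M) = \det(A)\det(\Phi)\det(A^T) = \prod_{i=1}^{c} \phi(s_i)$.

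To set this up, I would first reorder $S$ so that $s_i \mid s_j$ implies $i \leq j$; the simplest concrete choice is to sort $S$ in non-decreasing order, since $s_i \mid s_j$ forces $s_i \leq s_j$. Then define $A$ to be the $c \times c$ matrix whose $(i,j)$ entry equals $1$ when $s_j \mid s_i$ and $0$ otherwise. Under the chosen ordering, $A$ is lower triangular with $1$'s on the diagonal, so $\det(A) = 1$.

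The heart of the argument is verifying $A \Phi A^T = M$ entry by entry. A direct computation gives
\[ (A \Phi A^T)_{ij} = \sum_{k=1}^{c} [s_k \mid s_i]\,[s_k \mid s_j]\,\phi(s_k) = \sum_{\substack{s_k \in S \\ s_k \mid \GCD(s_i, s_j)}} \phi(s_k). \]
This is where the factor-closed hypothesis enters decisively: because every positive divisor of $s_i \in S$ itself belongs to $S$, the $s_k \in S$ that divide $\GCD(s_i, s_j)$ are \emph{precisely} all positive divisors of $\GCD(s_i, s_j)$. Invoking the classical Gauss identity $\sum_{d \mid n} \phi(d) = n$ with $n = \GCD(s_i, s_j)$ then yields $(A \Phi A^T)_{ij} = \GCD(s_i, s_j) = m_{ij}$, completing the factorization.

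The main point to be careful about is exactly the role of the factor-closed assumption: without it, the inner sum above would range over only a proper subset of the divisors of $\GCD(s_i, s_j)$, and Gauss's identity could not be applied directly. Finding the right $LDL^T$-style factorization is the key conceptual step; once that is in place, the rest is routine linear algebra resting on the identity $\sum_{d \mid n}\phi(d) = n$.
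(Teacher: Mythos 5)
Your proposal is correct, and it is worth noting that the paper itself does not prove Theorem \ref{thm:Smith} at all: it is quoted as a classical result of Smith, with a remark that it also follows from general determinant evaluations involving M\"obius functions of posets as in Krattenthaler's survey. Your argument is the standard self-contained proof via the factorization $M = A \Phi A^{T}$, and all the delicate points are handled properly: reordering $S$ in increasing order only conjugates $M$ by a permutation matrix (so neither $\det(M)$ nor $\prod_i \phi(s_i)$ changes), the incidence matrix $A$ with $A_{ij}=1$ iff $s_j \mid s_i$ is then unitriangular because $s_j \mid s_i$ with $s_j \neq s_i$ forces $s_j < s_i$, and the factor-closed hypothesis is exactly what guarantees that the inner sum $\sum_{s_k \in S,\, s_k \mid \GCD(s_i,s_j)} \phi(s_k)$ runs over \emph{all} divisors of $\GCD(s_i,s_j)$ (each such divisor divides $s_i \in S$), so Gauss's identity $\sum_{d \mid n}\phi(d)=n$ applies. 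Compared with the poset/M\"obius-function route alluded to in the paper, your factorization is more elementary and has the side benefit of exhibiting $M$ as congruent to a positive diagonal matrix (hence positive definite), while the M\"obius-theoretic approach generalizes readily to determinants of matrices $\bigl(f(\GCD(s_i,s_j))\bigr)$ for other arithmetic functions $f$.
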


In a recent survey by Krattenthaler, it is observed that Smith's theorem is a consequence of a more general property of determinants involving M\"obius functions of posets \cite[Theorems 55 and 57]{Krattenthaler2005}.

\begin{proof}[Proof of Theorem \ref{thm:2sequences}]
  First, consider equation (\ref{eq:hypothesis}) with $N=1$. Then the only term in each sum comes from $d=1$ and we obtain that
  \[ \# \{a_k: \GCD(a_k, 1) = 1 \}  =  \# \{b_k: \GCD(b_k, 1) = 1 \} ; \]
  that is, $m=n$.
  \par
  Now let $L = \LCM \{a_1, \ldots, a_n, b_1, \ldots, b_n\}$ and $S := \{s_1, \ldots, s_c\}$ be the set of positive divisors of $L$. In particular, $S$ is factor-closed and $\{a_1, \ldots, a_n, b_1, \ldots, b_n\} \subseteq S$. For $j=1, \ldots, c$, also define
  \[ x_j := \# \{k: a_k = s_j \}  -  \# \{k: b_k = s_j \}  .\]
  \par
  For each $i=1, \ldots, c$, we now consider equation (\ref{eq:hypothesis}) for $N=s_i$. After moving everything to the left side, this equation becomes
  \[ \sum_{d | s_i} d \cdot \left( \# \{k: \GCD(a_k, s_i) = d \} - \# \{k: \GCD(b_k, s_i) = d \} \right) = 0.  \]
  Now since every $a_k$ and every $b_k$ belong to $S$, we can rewrite this as
  \[ \sum_{d | s_i} \sum_{\substack{j \\ \GCD(s_j, s_i) = d}} d \cdot \left( \# \{k: a_k = s_j \} - \# \{k: b_k = s_j \} \right) = 0.  \] 
  That is,
  \[ \sum_{d | s_i} \sum_{\substack{j \\ \GCD(s_j, s_i) = d}} d x_j = 0.\]
  Now each value of $j$ (from 1 to $c$) occurs for exactly one value of $d$, so we can eliminate $d$ and obtain
  \[ \sum_{j=1}^c \GCD(s_j, s_i) x_j = 0.\]
\par 
  By varying $i$, we create a system of $c$ linear equations in $c$ variables represented by the matrix equation
  \[ M \mathbf{x} = \mathbf{0} \]
  where $M = (m_{ij})$ is given by $m_{ij} = \GCD(s_j, s_i) = \GCD(s_i, s_j)$. By Theorem \ref{thm:Smith}, $\det(M) = \prod_{i=1}^c \phi(s_i)$. In particular, $M$ is nonsingular, so the only solution is the trivial one $x_1 = \ldots = x_c = 0.$ That is, for each $j$ the two increasing sequences $a_1 \leq \ldots \leq a_n$ and $b_1 \leq \ldots \leq b_n$ contain the same number of terms that are equal to $s_j$. But every term in both sequences is equal to some $s_j$, and therefore the two sequences coincide.   
\end{proof}

\begin{proof}[Proof of Theorem \ref{ElTeorema}]
It is enough to show that (b) implies (a). Recall that the Dedekind zeta function of a number field $E$ is equal to the Artin $L$-function \cite[Chapter VII, \S 10]{Neukirch} of the natural permutation representation $\rho_E$ given by the action of the absolute Galois group $G_\Q$ on the complex embeddings of $E$. Moreover, if $\ell$ is a prime unramified in $E$ then the characteristic polynomial of the image of its Frobenius element under $\rho_{E}$ is given by \[{\rm det}(XI-\rho_{E}({\rm Frob}_{\ell}))=\prod_{i=1}^{g}(X^{f_{i}}-1)\] where the $f_{i}$'s are the residue class degrees of $\ell$.  From this we see that $g_{E}(\ell)$ is equal to the algebraic multiplicity of the eigenvalue $1$ of the transformation $\displaystyle \rho_{E}({\rm Frob}_{\ell})$. For details see \cite[\S2]{Manti}, in particular Lemma 2.4.\\

\noindent Let $\ell$ be a prime unramified in both $K$ and $L$ and let $N$ be a positive integer. We now apply our $\ell-p$ switch. By Chebotarev's density theorem, the set of primes $p$ for which $\displaystyle {\rm Frob}^{N}_{\ell}={\rm Frob}_{p}$ has positive density. We can thus choose such a prime $p$ that is unramified in both fields. 
%By Chebotarev's density theorem there is a prime $p$, which we can choose to be unramified in both fields since we can choose $p$ from a set of positive density, such that $\displaystyle {\rm Frob}^{N}_{\ell}={\rm Frob}_{p}$. 
By hypothesis $g_{K}(p)=g_{L}(p)$, hence the algebraic multiplicities of the eigenvalue $1$ of $\rho_{K}({\rm Frob}_{p})$ and $\rho_{L}({\rm Frob}_{p})$ coincide. Therefore the algebraic multiplicities of the eigenvalue $1$ of $\rho_{K}({\rm Frob}_{\ell})^{N}$ and $\rho_{L}({\rm Frob}_{\ell})^{N}$ are equal. \\

\noindent Write $A_K(\ell) = (f_1, \ldots, f_g)$ and $A_L(\ell) = (f'_1, \ldots, f'_{g'})$. Applying Lemma \ref{LosValoresPropios} to both $\rho_{K}({\rm Frob}_{\ell})^{N}$ and $\rho_{L}({\rm Frob}_{\ell})^{N}$, we obtain that
\[\sum_{d | N}  \# \{f_i:  {\rm GCD}(f_i, N) = d \}d = \sum_{d | N}  \# \{f'_i:  {\rm GCD}(f'_i, N) = d \}d   \]
for every positive integer $N$. The result now follows from Theorem \ref{thm:2sequences}. \end{proof}
%The result follows immediately from Lemma \ref{LosValoresPropios} and Theorem \ref{thm:2sequences}. \end{proof}

\section*{Acknowledgements}
We would like to thank the referee for the careful reading of the
paper, and specially for point it out a flaw in an argument of a previous version of the proof of Lemma \ref{LosValoresPropios}.

\noindent
{\footnotesize Tristram Bogart, Department of Mathematics, Universidad de los Andes,
Bogot\'a, Colombia ({\tt tcbogart@gmail.com})}

\noindent
{\footnotesize Guillermo Mantilla-Soler, Department of Mathematics, Universidad de los Andes,
Bogot\'a, Colombia ({\tt gmantelia@gmail.com})}

\end{document}